\documentclass[12pt,a4paper]{article} 
\RequirePackage[margin=1in]{geometry} 

\RequirePackage{commands.2.1}
\RequirePackage{hyperref.2.0}
\RequirePackage{figures.2.0}
\RequirePackage{dots-in-sections.2.0} 
\RequirePackage{english-environments.2.0}

\usepackage{graphicx}
\usepackage{enumitem}
\setlist[enumerate,1]{label=\textup{\arabic*)}}

\RequirePackage{bm} 

\DeclareMathOperator{\Span}{span}

\begin{document}
	\selectlanguage{english}
	
	\title{A closed-form convergence criterion\\for the weak greedy algorithm}
	\author{Mikhail Novikov\thanks{The work was supported by the Ministry of Science and Higher Education of the Russian Federation (agreement no. 075–15–2025–343).}}
	\date{12.06.2026}
	
	\maketitle
	
	\begin{abstract}
		In 2002, V. N. Temlyakov established a criterion for the convergence of the weak greedy algorithm in a Hilbert space for a given weakness sequence $ \tau = \{t_1,t_2,\ldots\} $. The criterion requires verifying a certain limiting relation for every nonnegative square-summable sequence. We give an equivalent closed-form criterion: the weak greedy algorithm converges if and only if~$ \sum_{n=1}^{\infty}(1+ n\sum_{k=1}^{n}t_k^2 )^{-1/2}t_n^2=+\infty $.
	\end{abstract}

	

\section{Introduction}
In this paper, we derive a closed-form criterion for the convergence of the weak greedy algorithm (WGA) expressed solely in terms of the weakness sequence $ \tau $ (see Theorem~\ref{2026-05-17_13-33-16}). Background on this algorithm can be found in \cite{On_the_convergence_of_a_weak_greedy_algorithm},
\cite{Weak_greedy_algorithms}, and
\cite{A_Criterion_for_Convergence_of_Weak_Greedy_Algorithms}. The central result of these papers is a convergence criterion for this algorithm (see Theorem~\ref{2026-05-17_13-42-08}). In Theorem~\ref{2026-05-17_13-33-16}, we present simpler form of this condition. We begin by describing the algorithm itself and explaining what we mean by its convergence.

Let $ H $ be a Hilbert space, and let $ \Dc $ be a dictionary. This means that $ \overline{\Span}\,\Dc=H $ and $ \Dc\subset\{g\in H\colon \norm{g}{}=1 \} $. Fix a weakness sequence $ \tau = \{t_k\}_{k=1}^{\infty} $, $ 0\le t_k\le 1 $, and consider an arbitrary element $ f\in H $. The weak greedy algorithm recursively constructs the objects $ \phi_{n}^{\tau}\in\Dc $, $ f_n^{\tau}\in H $, and $ G_n^{\tau}(f,\Dc) $ for all $ n\in\N $. This is done as follows. Set $ f_0^{\tau}=f $ and choose, for each $ n\in\N $, an element $ \phi_n^{\tau}\in\Dc $ such that
\begin{gather}
	\notag
	\LB| \LB\la f_{n-1}^{\tau},\phi_n^{\tau} \RB\ra \RB|
	\ge
	t_n\sup_{g\in\Dc}\LB\{ \LB| \LB\la f_{n-1}^{\tau}, g \RB\ra \RB| \RB\}.
\end{gather}
Next define
\begin{gather}
	\notag
	f_n^{\tau}
	=
	f_{n-1}^{\tau}
	-
	\LB\la f_{n-1}^{\tau},\phi_n^{\tau} \RB\ra\phi_n^{\tau},
	\quad
	G_n^{\tau}(f,\Dc)
	=
	\sum_{j=1}^{n}\LB\la f_{j-1}^{\tau}, \phi_j^{\tau} \RB\ra\phi_j^{\tau}.
\end{gather}
The sequence $ G_n^{\tau}(f,\Dc) $ is the output of the weak greedy algorithm. The problem studied in \cite{On_the_convergence_of_a_weak_greedy_algorithm}, \cite{Weak_greedy_algorithms}, and \cite{A_Criterion_for_Convergence_of_Weak_Greedy_Algorithms} was to determine for which sequences $ \tau $ this algorithm is guaranteed to converge, that is $ G_n^{\tau}(f,\Dc)\to f $ for any $ f\in H $.
\begin{Def}
	For a given sequence $ \tau=\{t_k\}_{k=1}^{\infty} $, $ t_k\in[0,1] $, we say that the weak greedy algorithm converges (WGA converges) if, for every Hilbert space $ H $, every dictionary $ \Dc $, every $ f\in H $, and every possible choice of the vectors $ \phi_n^{\tau} $, the convergence $ G_n^{\tau}(f,\Dc)\to f $ holds in $ H $.
\end{Def}

We state the convergence criterion for the WGA established in the cited papers.

\begin{Th}[Theorems 1.1, 1.2 of \cite{A_Criterion_for_Convergence_of_Weak_Greedy_Algorithms}]
	\label{2026-05-17_13-42-08}
	Consider a sequence $ \tau = \LB\{ t_j \RB\}_{j=1}^{\infty} $\textup, $ t_j\in[0,1] $. The following conditions are equivalent\textup:
	\begin{enumerate}
		\item \label{2026-05-17_13-13-48} WGA converges\textup;
		\item \label{2026-05-17_13-13-46} $ \displaystyle
			\liminf_{n\to\infty}\frac{a_n}{t_n}\sum_{j=1}^{n}a_j = 0
		$ for any $ \ell^2 $-sequence $ a = \LB\{ a_j \RB\}_{j=1}^{\infty} $, $ a_j \ge 0 $\textup;
		\item \label{2026-06-01_12-05-57} $ 
			\displaystyle\sum_{s=1}^{\infty}\frac{2^s}{q_s-q_{s-1}}=+\infty
		$ or $ 
			\displaystyle\sum_{s=1}^{\infty}2^{-s}\sum_{k=1}^{q_s}t_k^2=+\infty
		$ for any integers $ 0=q_0<q_1<q_2\ldots $
	\end{enumerate}
\end{Th}

The main result of this paper is an explicit form of conditions~\ref{2026-05-17_13-13-46}, \ref{2026-06-01_12-05-57} in Theorem~\ref{2026-05-17_13-42-08}.

\begin{Th}
	\label{2026-05-17_13-33-16}
	Let $ \tau = \LB\{ t_j \RB\}_{j=1}^{\infty} $, $ t_j\in[0,1] $. The following conditions are equivalent\textup:
	\begin{enumerate}
		\item \label{2026-05-17_13-35-27} WGA converges\textup;
		\item \label{2026-06-08_16-57-27} The series
		$ \displaystyle 
			\sum_{n=0}^{\infty}\iiiLB( 2^{-n}\sum_{k=2^n}^{2^{n+1}-1}t_k^2 \iiiRB)^{1/2}
		$ diverges\textup;
		\item \label{2026-05-17_13-35-30} The series
		$ \displaystyle
			\sum_{n=1}^{\infty}\iiiLB( 1 + n\sum_{k=1}^{n}t_k^2 \iiiRB)^{-1/2}t_n^2
		$ diverges\textup;
		\item \label{2026-06-09_11-22-20} The series
		$ \displaystyle 
			\sum_{n=1}^{\infty}\frac{1}{\sqrt{n}}\iiiLB(  
				\iiiLB( \sum_{k=1}^{n}t_k^2 \iiiRB)^{1/2}
				\!\!-
				\iiiLB( \sum_{k=1}^{n-1}t_k^2 \iiiRB)^{1/2}\,
			\iiiRB)
		$ diverges.
	\end{enumerate}
\end{Th}

\begin{Rem}
	\label{2026-05-17_14-35-03}
	Condition~\ref{2026-05-17_13-35-30} of Theorem~\ref{2026-05-17_13-33-16} can be rewritten in the following way:
	\begin{gather}
		\label{2026-05-17_13-20-54}
		\sum_{n=1}^{\infty}\iiiLB( n\sum_{k=1}^{n}t_k^2 \iiiRB)^{-1/2}t_n^2=+\infty.
	\end{gather}
	In this form, the expression is homogeneous with respect to the sequence $ \tau $. But then
	we should clarify that $ \iiLB( n\sum_{k=1}^{n}t_k^2 \iiRB)^{-1/2}t_n^2 = 0 $ whenever $ t_1=t_2=\ldots=t_n=0 $.
\end{Rem}

\subsection*{Remarks and outline of the paper}
In Section~\ref{2026-05-26_03-09-24}, we review previous results. Note that condition~\ref{2026-06-08_16-57-27} of Theorem~\ref{2026-05-17_13-33-16} was known earlier. Namely, it was proved in Theorem~3 of~\cite{On_the_convergence_of_a_weak_greedy_algorithm} that condition~\ref{2026-06-08_16-57-27} implies the convergence of WGA. We prove the reverse implication in Lemma~\ref{2026-06-08_21-18-46}. In fact, this is sufficient to obtain an explicit convergence criterion for the WGA.

In Section~\ref{2026-06-09_17-29-29} we prove Theorem~\ref{2026-05-17_13-33-16} and in Lemma~\ref{2026-06-09_15-53-36} we show that related expressions are equal up to multiplicative constants. In Lemma~\ref{2026-06-09_15-53-36} we consider a generalization of the sum $ \sum_{n=0}^{\infty}\iiLB( 2^{-n}\sum_{k=2^n}^{2^{n+1}-1}t_k^2 \iiRB)^{1/2} $, which was originally proposed in the article~\cite{New_Conditions_for_the_Convergence_of_a_Weak_Greedy_Algorithm}.

Section~\ref{2026-06-09_17-30-14} is devoted to explaining what these expressions mean. We show that the sum $ \sum_{n=1}^{\infty}(n\sum_{k=1}^{n}t_k^2)^{-1/2}t_n^2 $ is equal up to a constant to the minimal value of the sum $ \sum_{n=1}^{\infty}a_n^2 $ in which numbers $ a_n $ satisfy the inequality $ a_n\sum_{k=1}^{n}a_k\ge t_n $. Initially, an explicit criterion for convergence of the WGA was found while solving this optimization problem related to condition~\ref{2026-05-17_13-13-46} of Theorem~\ref{2026-05-17_13-42-08}. Later, with the help of ChatGPT-5.5 Thinking, we observed that the newly obtained condition~\ref{2026-05-17_13-35-30} and the previously known condition~\ref{2026-06-08_16-57-27} of Theorem~\ref{2026-05-17_13-33-16} are equivalent. This allowed us to significantly simplify the proof of the main result. Nevertheless, we have decided to keep the previous proof of the main result in order to illustrate the meaning of the obtained expression.


\section{Summary of previous results}
\label{2026-05-26_03-09-24}
In this section, we review previously obtained partial results concerning this problem. Specifically, we present the necessary and sufficient conditions for the convergence of the weak greedy algorithm that are expressed solely in terms of the sequence $ \tau $ and were known prior to this article. We also mention several special cases of the problem. We start with sufficient conditions for convergence of the WGA.
\begin{Lm}[Th. 1 of \cite{Weak_greedy_algorithms}, Th. 3 of \cite{On_the_convergence_of_a_weak_greedy_algorithm}]
	\label{2026-05-26_12-37-30}
	Let $ \tau = \{t_k\}_{k=1}^{\infty} $, $ t_k\in[0,1] $. Then
	\begin{gather}
		\label{2026-06-09_17-39-54}
		\sum_{n=1}^{\infty}\frac{t_n}{n}=+\infty
		\qRarr
		\sum_{n=0}^{\infty}\iiiLB( 2^{-n}\sum_{k=2^n}^{2^{n+1}-1}t_k^2 \iiiRB)^{1/2}=+\infty
		\qRarr
		\text{WGA converges}.
	\end{gather}
\end{Lm}
A generalization of the second implication can be found in Theorem~9 of~\cite{New_Conditions_for_the_Convergence_of_a_Weak_Greedy_Algorithm}. As mentioned above, the second implication of~\eqref{2026-06-09_17-39-54} in fact can be reversed. The first implication is not reversible. As a counterexample, one can take $ t_n = 1 $ if $ n = k\lfloor\ln(k)\rfloor^2 $ for some $ k\in\N $ and $ t_n = 0 $ otherwise\footnote{Here $ \lfloor x\rfloor $ denotes the largest integer $ m $
such that $ m\le x $.} (see Remark~\ref{2026-06-05_14-43-02}). The following lemma formulates the previously known necessary conditions for the convergence of the WGA.

\begin{Lm}[Rem. 2 of \cite{Weak_greedy_algorithms}, Th. 7 of \cite{New_Conditions_for_the_Convergence_of_a_Weak_Greedy_Algorithm}]
	\label{2026-05-26_12-54-22}
	Let $ \tau = \{t_k\}_{k=1}^{\infty} $, $ t_k\in[0,1] $. Then
	\begin{gather}
		\notag
		\text{WGA converges}
		\qRarr
		\sum_{n=1}^{\infty}\iiiLB(1 + \sum_{k=1}^{n}t_k \iiiRB)^{-1}t_n^2=+\infty
		\qRarr
		\sum_{n=1}^{\infty}t_n^2=+\infty.
	\end{gather}
\end{Lm}

These conditions are not sufficient. For example, we can choose $ t_n = 1 $ if $ n=k^2 $ and $ t_n = 0 $ otherwise. Note that the expressions in Lemmas~\ref{2026-05-26_12-37-30} and~\ref{2026-05-26_12-54-22} provide lower and upper bounds for the expressions in Theorem~\ref{2026-05-17_13-33-16}. In particular, Lemmas~\ref{2026-05-26_12-37-30} and \ref{2026-05-26_12-54-22} follow from Theorem~\ref{2026-05-17_13-33-16} and the next lemma which we state without proof.

\begin{Lm}
	\label{2026-06-10_10-58-50}
	Let $ \tau = \{t_k\}_{k=1}^{\infty} $, $ t_k\ge 0 $. Then
	\begin{gather}
		\notag
		\sum_{n=1}^{\infty}\frac{t_n}{n}
		\le
		\sum_{n=0}^{\infty}\iiiLB( 2^{-n}\sum_{k=2^n}^{2^{n+1}-1}t_k^2 \iiiRB)^{1/2}
		\asymp
		\sum_{n=1}^{\infty}\iiiLB( n\sum_{k=1}^{n}t_k^2 \iiiRB)^{-1/2}t_n^2
		\le
		\sum_{n=1}^{\infty}\iiiLB(\sum_{k=1}^{n}t_k \iiiRB)^{-1}t_n^2.
	\end{gather}
\end{Lm}

The next lemma shows that the convergence of the WGA is equivalent to the divergence of the series $ \sum_{n=1}^{\infty}t_n/n $ if the sequence $ \tau $ is sufficiently regular.

\begin{Lm}[Th. 8 of \cite{New_Conditions_for_the_Convergence_of_a_Weak_Greedy_Algorithm}]
	\label{2026-05-27_05-54-31}
	Let $ \tau = \{t_k\}_{k=1}^{\infty} $, $ t_k\in[0,1] $, and suppose that there are a sequence $ \{\lambda_k\}_{k=1}^{\infty} $\textup, $ \lambda_k> 0 $\textup, and a constant $ c>0 $ such that
	\begin{gather}
		\label{2026-05-27_06-03-34}
		\sum_{k=1}^{n}\lambda_k
		\ge
		c\,n\lambda_n
		\qtext{and}
		\frac{t_n}{\lambda_n}
		\ge
		\frac{t_{n+1}}{\lambda_{n+1}}
		\quad
		\text{for all }
		n\in\N.
	\end{gather}
	Then WGA converges if and only if $ \displaystyle 
		\sum_{n=1}^{\infty}\frac{t_n}{n}=+\infty
	$.
\end{Lm}

As a corollary, we obtain the following statement, which was first formulated as Theorem~1 in \cite{On_the_convergence_of_a_weak_greedy_algorithm}.

\begin{Cor}
	Let $ \tau=\{t_k\}_{k=1}^{\infty} $, $ 1\ge t_k\ge t_{k+1}\ge 0 $ for all $ k\in\N $. Then
	\begin{gather}
		\notag
		\text{WGA converges}
		\qLRarr
		\sum_{n=1}^{\infty}\frac{t_n}{n}=+\infty.
	\end{gather}
\end{Cor}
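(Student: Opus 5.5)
The statement is the Corollary, and I will derive it from Lemma~\ref{2026-05-27_05-54-31} by choosing the auxiliary sequence to be constant: $\lambda_k=1$ for all $k\in\N$. Two cases are needed, according to whether $t_k=0$ for some $k$.

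\textbf{Case 1: $t_k>0$ for all $k$.} First I check both parts of condition~\eqref{2026-05-27_06-03-34}.
\begin{itemize}
\item[(a)] $\sum_{k=1}^{n}\lambda_k=n\ge c\,n\lambda_n$ holds with $c=1$.
\item[(b)] $t_n/\lambda_n=t_n\ge t_{n+1}=t_{n+1}/\lambda_{n+1}$ is exactly the monotonicity hypothesis $t_k\ge t_{k+1}$.
\end{itemize}
So the hypotheses of Lemma~\ref{2026-05-27_05-54-31} hold, and that lemma yields directly that WGA converges if and only if $\sum_{n}t_n/n=+\infty$. Note that the lemma only requires $\lambda_k>0$ and $t_k\in[0,1]$, so even zero values of $t_k$ are formally allowed by it. The case split is therefore a precaution rather than a necessity.

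\textbf{Case 2: $t_N=0$ for some $N$.} By monotonicity, $t_k=0$ for all $k\ge N$. Then $\sum t_n/n$ is a finite sum and converges, so I must show that WGA does not converge. This follows from Lemma~\ref{2026-05-26_12-54-22}: convergence of WGA would force $\sum t_n^2=+\infty$, but here that sum is finite. Alternatively, Lemma~\ref{2026-05-27_05-54-31} with $\lambda_k\equiv1$ applies verbatim in this case as well.

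There is no real obstacle. The only points to watch are that the constant $c$ is allowed to equal $1$ (any $c>0$ is permitted) and that the ratios $t_n/\lambda_n$ are well defined, which holds because $\lambda_k=1>0$.
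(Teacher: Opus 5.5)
Your proposal is correct and matches the paper's route: the paper obtains the Corollary directly from Lemma~\ref{2026-05-27_05-54-31} with the constant choice $\lambda_k\equiv 1$, so that $c=1$ and the second condition in~\eqref{2026-05-27_06-03-34} is just the monotonicity of $\tau$. Your Case~2 is harmless but unnecessary, since, as you note yourself, the lemma already allows sequences with zero terms.
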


Previous papers also studied a convergence criterion for the weak greedy algorithm in the case when $ t_k\in\{0,1\} $ for every $ k\in\N $.

\begin{Lm}[Th. 2 and Rem. 3.1 of \cite{On_the_convergence_of_a_weak_greedy_algorithm}]
	\label{2026-05-29_08-41-12}
	Let $ \{n_k\}_{k=1}^{\infty} $ be a strictly increasing sequence of natural numbers and let $ \tau=\{t_n\}_{n=1}^{\infty} $ be given by
	\begin{gather}
		\notag
		t_n
		=
		\begin{cases}
			1, & \text{if $ n=n_k $ for some $ k\in\N $} ;\\
			0, & \text{otherwise} .\\
		\end{cases}
	\end{gather}
	If, in addition, there exists $ C>0 $ such that $ n_{k+1}/n_{k}\le C $ for all $ k\in\N $, then
	\begin{gather}
		\label{2026-05-26_02-06-07}
		\sum_{k=1}^{\infty}\frac{(n_{k+1} - n_k)^{1/2}}{n_k}=+\infty
		\qRarr
		\text{WGA converges}.
	\end{gather}
	If, instead, we assume that $ n_{k+2}-n_{k+1}\ge n_{k+1}-n_{k} $ for all $ k\in\N $ then the reverse implication in~\eqref{2026-05-26_02-06-07} holds.
\end{Lm}

\begin{Rem}
	\label{2026-06-05_14-43-02}
	In this setting WGA converges if and only if $ \sum_{k=1}^{\infty}(kn_k)^{-1/2}=+\infty $. This follows directly from Theorem~\ref{2026-05-17_13-33-16}. Hence, the condition $ \sum_{k=1}^{\infty}(n_{k+1}-n_k)^{1/2}/n_k=+\infty $ is neither sufficient nor necessary in the general case. For example, we can choose $ n_k = 2^{2^k} $ and $ \{n_k\}_{k=1}^{\infty}=\bigcup_{m\in\N}\{2^mm^2,\ldots, 2^m(m^2 + 1)\} $ respectively.
\end{Rem}


\section{Proof of the main result}
\label{2026-06-09_17-29-29}
In this section, we prove Theorem~\ref{2026-05-17_13-33-16}. The implication from condition~\ref{2026-06-08_16-57-27} to condition~\ref{2026-05-17_13-35-27} was proved in Theorem~3 of \cite{On_the_convergence_of_a_weak_greedy_algorithm}. In the next lemma, we verify the implication in the other direction.
\label{2026-06-08_21-18-16}
\begin{Lm}
	\label{2026-06-08_21-18-46}
	Let $ \tau = \{t_k\}_{k=1}^{\infty} $, $ t_k\ge 0 $ be such that
	\begin{gather}
		\label{2026-06-08_21-42-06}
		\sum_{m=0}^{\infty}\iiiLB( 2^{-m}\sum_{n=2^m}^{2^{m+1}-1}t_n^2 \iiiRB)^{1/2} <+\infty.
	\end{gather}
	Then there exists an $ \ell^2 $-sequence $ a=\{a_k\}_{k=1}^{\infty} $ such that
	\begin{gather}
		\label{2026-06-08_22-00-53}
		a_n\sum_{k=1}^{n}a_k\ge t_n
		\quad
		\text{for all}
		\ 
		n\in\N.
	\end{gather}
\end{Lm}

\begin{proof}
	For all numbers $ m,n\in\N $ such that $ 2^m\le n\le 2^{m+1}-1 $ define values
	\begin{gather}
		\notag
		x_0 = t_1,
		\quad
		x_m^2
		=
		\sum_{k=2^m}^{2^{m+1}-1}t_k^2;
		\quad
		a_1 = \sqrt{t_1},
		\quad
		a_n
		=
		\max\iiiLB(  
			\frac{t_n}{2^{m/4}\sqrt{x_m}},
			\frac{\sqrt{x_{m+1}}}{2^{(3m-1)/4}}
		\iiiRB).
	\end{gather}
	First, we verify that $ a\in\ell^2 $:
	\begin{multline}
		\notag
		\sum_{n=2}^{\infty}a_n^2
		=
		\sum_{m=1}^{\infty}\sum_{n=2^m}^{2^{m+1}-1}a_n^2
		\le
		\sum_{m=0}^{\infty}\sum_{n=2^m}^{2^{m+1}-1}\LB(  
			\frac{t_n^2}{2^{m/2}x_m} + \frac{x_{m+1}}{2^{(3m-1)/2}}
		\RB)
		\\
		\notag
		=
		\sum_{m=0}^{\infty}2^{-m/2}x_m + 2^{-(m-1)/2}x_{m+1}
		\le
		3\sum_{m=0}^{\infty}2^{-m/2}x_m
		\overset{\eqref{2026-06-08_21-42-06}}{<}
		+\infty.
	\end{multline}
	Since the inequality~\eqref{2026-06-08_22-00-53} holds for $ n=1 $, it suffices to prove this estimate for all $ n,m\in\N $ such that $ 2^{m}\le n\le 2^{m + 1} - 1 $:
	\begin{gather}
		\notag
		a_n\sum_{k=1}^{n}a_k
		\ge
		\frac{t_n}{2^{m/4}\sqrt{x_m}}\sum_{k=2^{m-1}}^{2^m-1}\frac{\sqrt{x_m}}{2^{(3m-4)/4}}
		=
		t_n.
	\end{gather}
\end{proof}

The next lemma shows that conditions \ref{2026-06-08_16-57-27}, \ref{2026-05-17_13-35-30}, \ref{2026-06-09_11-22-20} of Theorem~\ref{2026-05-17_13-33-16} are equivalent and thus finishes the proof of Theorem~\ref{2026-05-17_13-33-16}.

\begin{Lm}
	\label{2026-06-09_15-53-36}
	Let $ \tau = \{t_k\}_{k=1}^{\infty} $, $ t_k\ge 0 $, and suppose that a sequence of natural numbers $ \gamma = \{\gamma_m\}_{m=0}^{\infty} $, $ \gamma_0 = 1 $, satisfies the relations $ \alpha\le \gamma_{m+1}/\gamma_m \le\beta $, where $ 1<\alpha\le\beta $. Then
	\begin{gather}
		\label{2026-06-09_12-15-19}
		\sum_{m=0}^{\infty}\iiiLB( \gamma_m^{-1}\sum_{k=\gamma_m}^{\gamma_{m+1}-1}t_k^2 \iiiRB)^{\frac{1}{2}}
		\ 
		\asymp
		\ 
		\sum_{n=1}^{\infty}\iiiLB( n\sum_{k=1}^{n}t_k^2 \iiiRB)^{-\frac{1}{2}}t_n^2
		\ 
		\asymp
		\ 
		\sum_{n=1}^{\infty}\frac{1}{\sqrt{n}}\iiiLB(  
			\iiiLB( \sum_{k=1}^{n}t_k^2 \iiiRB)^{\frac{1}{2}}
			\!\!-
			\iiiLB( \sum_{k=1}^{n-1}t_k^2 \iiiRB)^{\frac{1}{2}}\,
		\iiiRB),
	\end{gather}
	where the implicit constants in these relations depend on $ \alpha $ and $ \beta $ only.
\end{Lm}

\begin{Rem}
	The inequalities~\eqref{2026-06-09_12-14-28} below are more precise versions of the relations~\eqref{2026-06-09_12-15-19}.
\end{Rem}

\begin{proof}
	Introduce the quantities $ s_n $, $ x_m $, $ y_m $, and $ \lambda $:
	\begin{gather}
		\notag
		s_n^2
		=
		\sum_{k=1}^{n}t_k^2,
		\quad
		x_m^2
		=
		\sum_{k=\gamma_m}^{\gamma_{m+1}-1}t_k^2,
		\quad
		y_m^2
		=
		\sum_{n=0}^{m}x_n^2
		=
		\sum_{k=1}^{\gamma_{m+1}-1}t_k^2;
		\quad
		\lambda
		=
		\sum_{m=0}^{\infty}\frac{x_m^2}{\sqrt{\gamma_m}\, y_m}.
	\end{gather}
	We also set $ s_0 = y_{-1} = 0 $. It suffices to prove the following chain of inequalities:
	\begin{gather}
		\label{2026-06-09_12-14-28}
		\beta^{-1/2}\sqrt{1-\alpha^{-1/2}}\sum_{m=0}^{\infty}\frac{x_m}{\sqrt{\gamma_m}}
		\ 
		\le
		\ 
		\sum_{n=1}^{\infty}\frac{t_n^2}{\sqrt{n}\,s_n}
		\ 
		\le
		\ 
		2\sum_{n=1}^{\infty}\frac{s_n-s_{n-1}}{\sqrt{n}}
		\ 
		\le
		\ 
		2\sum_{m=0}^{\infty}\frac{x_m}{\sqrt{\gamma_m}}.
	\end{gather}
	First, we estimate $ \lambda $ from above:
	\begin{gather}
		\notag
		\lambda
		=
		\sum_{m=0}^{\infty}\sum_{n=\gamma_m}^{\gamma_{m+1}-1}
		\iiiLB(  
			\gamma_m\sum_{k=1}^{\gamma_{m+1}-1}t_k^2
		\iiiRB)^{-1/2}t_n^2
		\le
		\sqrt{\beta}\sum_{n=1}^{\infty}\iiiLB( n\sum_{k=1}^{n}t_k^2 \iiiRB)^{-1/2}t_n^2
		=
		\sqrt{\beta}\sum_{n=1}^{\infty}\frac{t_n^2}{\sqrt{n}\,s_n}.
	\end{gather}
	Hence, the first inequality in~\eqref{2026-06-09_12-14-28} follows from the next relations:
	\begin{gather}
		\notag
		\iiiLB( \sum_{m=0}^{\infty}\frac{x_m}{\sqrt{\gamma_m}} \iiiRB)^{2}
		=
		\iiiLB(  
			\sum_{m=0}^{\infty}\frac{x_m}{\gamma_m^{1/4}\sqrt{y_m}}\cdot\frac{\sqrt{y_m}}{\gamma_m^{1/4}}
		\iiiRB)^{2}
		\le
		\lambda\sum_{m=0}^{\infty}\frac{y_m}{\sqrt{\gamma_m}}
		\\
		\notag
		=
		\lambda\sum_{m=0}^{\infty}\sum_{n=0}^{m}\frac{x_n^2}{\sqrt{\gamma_m}\,y_m}
		\le
		\lambda\sum_{n=0}^{\infty}\frac{x_n^2}{\sqrt{\gamma_n}\, y_n}
		\sum_{m=n}^{\infty}\sqrt{\gamma_n/\gamma_m}
		\le
		\lambda^2\sum_{k=0}^{\infty}\alpha^{-k/2}
		=
		\frac{\lambda^2}{1 - \alpha^{-1/2}}.
	\end{gather}
	The second inequality in~\eqref{2026-06-09_12-14-28} can be obtained as follows:
	\begin{gather}
		\notag
		\sum_{n=1}^{\infty}\frac{t_n^2}{\sqrt{n}\,s_n}
		=
		\sum_{n=1}^{\infty}\frac{s_n^2 - s_{n-1}^{2}}{\sqrt{n}\,s_n}
		\le
		2\sum_{n=1}^{\infty}\frac{s_n - s_{n-1}}{\sqrt{n}}.
	\end{gather}
	It remains to verify the last inequality:
	\begin{multline}
		\notag
		\sum_{n=1}^{\infty}\frac{s_n-s_{n-1}}{\sqrt{n}}
		=
		\sum_{m=0}^{\infty}\sum_{n=\gamma_m}^{\gamma_{m+1}-1}\frac{s_n-s_{n-1}}{\sqrt{n}}
		\le
		\sum_{m=0}^{\infty}\frac{1}{\sqrt{\gamma_m}}\sum_{n=\gamma_m}^{\gamma_{m+1}-1}(s_n-s_{n-1})
		\\
		\notag
		=
		\sum_{m=0}^{\infty}\frac{y_m - y_{m-1}}{\sqrt{\gamma_m}}
		\le
		\sum_{m=0}^{\infty}\frac{\sqrt{y_{m\vphantom{1}}^2 - y_{m-1}^2}}{\sqrt{\gamma_m}}
		=
		\sum_{m=0}^{\infty}\frac{x_m}{\sqrt{\gamma_m}}.
	\end{multline}
\end{proof}


\section{Interpretation of the main expression}
\label{2026-06-09_17-30-14}

The following theorem explains what the expressions in Theorem~\ref{2026-05-17_13-33-16} mean in the case where they are finite.

\begin{Th}
\label{2026-06-09_15-06-45}
	Let $ \tau = \{t_k\}_{k=1}^{\infty} $\textup, $ t_k\in[0,1] $, $ t_1>0 $. Then
	\begin{gather}
		\notag
		\inf\iiiLB\{  
			\sum_{n=1}^{\infty}a_n^2\colon
			a_n\sum_{k=1}^{n}a_k\ge t_n
		\iiiRB\}
		\asymp
		\inf\iiiLB\{  
			\sum_{n=1}^{\infty}\iiiLB( \sum_{k=1}^{n}u_k \iiiRB)^{-1}u_n^2\colon u_n\ge t_n
		\iiiRB\}
		\asymp
		\sum_{n=1}^{\infty}\iiiLB( n\sum_{k=1}^{n}t_k^2 \iiiRB)^{-1/2}t_n^2.
	\end{gather}
\end{Th}

\begin{Rem}
	This theorem implies that condition~\ref{2026-05-17_13-13-46} of Theorem~\ref{2026-05-17_13-42-08} is equivalent to condition~\ref{2026-05-17_13-35-30} of Theorem~\ref{2026-05-17_13-33-16} and, as a consequence, the WGA converges if and only if the sum $ \sum_{n=1}^{\infty}\LB( n\sum_{k=1}^{n}t_k^2 \RB)^{-1/2}t_n^2 $ diverges. To see this, it suffices to note that if there exists a sequence $ a=\{a_k\}_{k=1}^{\infty} $ such that $ \liminf_{n\to\infty}t_n^{-1}a_n\sum_{k=1}^{n}a_k>0 $ then there exists an $ \ell^2 $-sequence $ \tilde a $ such that $ t_n^{-1}\tilde a_n\sum_{k=1}^{n}\tilde a_k\ge 1 $ for all $ n\in\N $. Indeed, one can choose a sufficiently large number $ m\in\N $ and set
	$ \tilde a_n = 1 $ for $ n\le m $, $ \tilde a_n = m a_n $ for $ n\ge m $.
\end{Rem}

The first relation in Theorem~\ref{2026-06-09_15-06-45} follows from the next lemma.

\begin{Lm}
	\label{2026-06-05_13-47-14}
	Let $ a=\{a_k\}_{k=1}^{\infty} $, $ u=\{u_k\}_{k=1}^{\infty} $, and $ A_n = \sum_{k=1}^{n}a_k $, $ U_n=\sum_{k=1}^{n}u_k $. Assume that $ a_1,u_1>0 $ and for all $ n\in\N $ we have $ a_n,u_n\ge 0 $,
	\begin{gather}
		\label{2026-05-17_15-02-57}
		a_nA_n
		=
		u_n.
	\end{gather}
	Then for all $ n\in\N $ we have
	\begin{gather}
		\label{2026-06-05_11-43-05}
		\frac{u_n}{\sqrt{\mathstrut 2U_n}}
		\le
		a_n
		\le
		\frac{2u_n}{\sqrt{\mathstrut U_n}}.
		\qquad
		\text{In particular\textup,}
		\quad
		\frac{1}{2}\sum_{n=1}^{\infty}\frac{u_n^2}{U_n}
		\le
		\sum_{n=1}^{\infty}a_n^2
		\le
		4\sum_{n=1}^{\infty}\frac{u_n^2}{U_n}.
	\end{gather}
	Moreover, the sequence $ a $ is uniquely determined by the sequence $ u $ and vice versa.
\end{Lm}

\begin{proof}
	First, we define values $ A_0 = U_0 = 0 $ and note that $ A_n\ge a_1>0 $, $ U_n\ge u_1>0 $ for all $ n\in\N $. What is more, inequalities $ a_1>0 $ and $ u_1>0 $ hold simultaneously, because $ a_1^2 = u_1 $. Next, we express $ a_n $ in terms of $ u_n $ and $ A_{n-1} $:
	\begin{gather}
		\notag
		a_n(A_{n-1} + a_n)
		\overset{\eqref{2026-05-17_15-02-57}}{=}
		u_n;
		\quad
		a_n^2+A_{n-1}a_n - u_n = 0;
		\quad
		a_n
		=
		\frac{\pm\sqrt{\mathstrut A_{n-1}^{2} + 4u_n} - A_{n-1}}{2}.
	\end{gather}
	Choosing the nonnegative root, we obtain
	\begin{gather}
		\label{2026-05-17_15-09-45}
		a_n
		=
		\frac{\sqrt{\mathstrut A_{n-1}^2 + 4u_{n}} - A_{n-1}}{2}
		=
		\frac{2u_n}{\sqrt{\mathstrut A_{n-1}^2 + 4u_n} + A_{n-1}}.
	\end{gather}
	Thus, the sequences $ a $ and $ u $ can be uniquely recovered from each other. We are ready to verify inequalities~\eqref{2026-06-05_11-43-05}. For $ n\ge 2 $ the following relations hold:
	\begin{gather}
		\label{2026-05-18_10-28-05}
		a_nA_{n-1}
		\overset{\eqref{2026-05-17_15-09-45}}{=}
		\frac{2u_n}{\sqrt{\mathstrut 1 + 4u_n/A_{n-1}^2} + 1}
		\le
		u_n.
	\end{gather}
	Next, we estimate $ A_n^2 $:
	\begin{gather}
		\label{2026-05-18_10-30-21}
		A_n^2
		=
		A_1^2 + \sum_{k=2}^{n}(A_k^2 - A_{k-1}^2)
		=
		a_1A_1 + \sum_{k=2}^{n}a_k(A_k + A_{k-1})
		=
		\sum_{k=1}^{n}a_kA_k + \sum_{k=2}^{n}a_kA_{k-1}.
	\end{gather}
	Then, by relations~\eqref{2026-05-17_15-02-57} and~\eqref{2026-05-18_10-28-05}, we have the inequalities
	$
		\notag
		A_n^2
		\le
		\sum_{k=1}^{n}u_k + \sum_{k=2}^{n}u_k\le 2U_n
	$.
	On the other hand, there is a lower estimate:	
	$
		\notag
		A_{n}^2
		\ge
		\sum_{k=1}^{n}a_kA_k
		=
		\sum_{k=1}^{n}u_k
		=
		U_n
	$.
	Therefore, we have $ U_n\le A_n^2\le 2U_n $. This yields desired estimates of $ a_n $:
	\begin{align}
		\notag
		a_n
		&\overset{\eqref{2026-05-17_15-09-45}}{=}
		\frac{2u_n}{\sqrt{\mathstrut A_{n-1}^2+4u_n}+A_{n-1}}
		\le
		\frac{2u_n}{\sqrt{\mathstrut U_{n-1}+u_n}}
		=
		\frac{2u_n}{\sqrt{\mathstrut U_n}};
		\\
		\notag
		a_n
		&\ge
		\frac{2u_n}{\sqrt{\mathstrut 2U_{n-1}+4u_n}+\sqrt{\mathstrut 2U_{n-1}}}
		=
		\frac{u_n}{\sqrt{\mathstrut 2U_n}}
		\cdot
		\frac{2}{\sqrt{1 + u_n / U_n}+\sqrt{1 - u_n/U_n}}
		\ge
		\frac{u_n}{\sqrt{\mathstrut 2U_n}}.
	\end{align}
	These expressions make sense because $ U_n>0 $.
\end{proof}

We now prove the second relation in Theorem~\ref{2026-06-09_15-06-45}.

\begin{Lm}
	\label{2026-06-02_12-10-50}
	For any sequence $ \tau = \{t_k\}_{k=1}^{\infty} $\textup, $ t_n\in[0,1] $\textup, the following inequalities hold\textup:
	\begin{gather}
		\label{2026-06-09_13-52-09}
		\frac{1}{2}\sum_{n=1}^{\infty}\iiiLB( n\sum_{k=1}^{n}t_k^{2} \iiiRB)^{-1/2}t_n^{2}
		\le
		\inf\iiiLB\{\sum_{n=1}^{\infty}\iiiLB( \sum_{k=1}^{n}u_k \iiiRB)^{-1} u_n^2\colon u_k\ge t_k\iiiRB\}
		\le
		8\sum_{n=1}^{\infty}\iiiLB( n\sum_{k=1}^{n}t_k^{2} \iiiRB)^{-1/2}t_n^{2}.	
	\end{gather}
\end{Lm}

\begin{proof}
	For each $ n\in\N $, define the quantities $ U_n $, $ v_n $, $ s_n $:
	\begin{gather}
		\label{2026-06-10_16-30-09}
		U_n
		=
		\sum_{k=1}^{n}u_k,
		\quad
		v_n^2
		=
		\sum_{k=1}^{n}u_k^2,
		\quad
		s_n^2
		=
		\sum_{k=1}^{n}t_k^2.
	\end{gather}
	Note that $ v_n\ge s_n $ for all $ n\in\N $ and prove the lower bound of the sum $ \sum_{n=1}^{\infty}U_n^{-1}u_n^2 $:
	\begin{multline}
		\notag
		\sum_{n=1}^{\infty}\frac{t_n^2}{\sqrt{n}\, s_n}
		\overset{\eqref{2026-06-09_12-14-28}}{\le}
		2\sum_{n=1}^{\infty}\frac{s_n - s_{n-1}}{\sqrt{n}}
		\le
		2\sum_{n=1}^{\infty}\frac{v_n - v_{n-1}}{\sqrt{n}}
		\\
		\notag
		=
		2\sum_{n=1}^{\infty}\frac{v_n^2 - v_{n-1}^2}{\sqrt{n}(v_n + v_{n-1})}
		\le
		2\sum_{n=1}^{\infty}\frac{u_n^2}{\sqrt{n}v_n}
		\le
		2\sum_{n=1}^{\infty}\frac{u_n^2}{U_n}.
	\end{multline}
	
	Next, we assume that right-hand side of~\eqref{2026-06-09_13-52-09} is finite and construct the appropriate sequence $ u_n $ to obtain the upper bound. To do this, we define, for all $ n\in\N $, the quantities $ r_n $ and $ \lambda $:
	\begin{gather}
		\label{2026-05-18_12-40-33}
		r_n
		=
		\sup\{k^{-3/2}s_k,\ k\ge n\},
		\quad
		\lambda
		=
		\sum_{n=1}^{\infty}\frac{t_n^2}{\sqrt{n}\,s_n}
		<
		\infty.
	\end{gather}
	The sequence $ r_n $ is a non-increasing majorant of the sequence $ n^{-3/2}s_n $. Since $ k^{-3/2}s_k\le k^{-1} $, the supremum in \eqref{2026-05-18_12-40-33} is attained, and there exists a strictly increasing function $ \nu\colon\N_0\to\N_0 $ such that
	\begin{gather}
		\label{2026-05-18_12-52-15}
		\nu(0) = 0,
		\quad
		r_n
		=
		r_{\nu(k)}
		=
		\nu(k)^{-3/2}s_{\nu(k)},
		\quad
		\text{where}
		\quad
		k
		=
		\inf\{l\in\N_0\colon \nu(l)\ge n\}.
	\end{gather}
	Note that the function $ \nu $ may not be unique. We now define the numbers $ u_n $:
	\begin{gather}
		\label{2026-05-18_12-52-50}
		u_n
		=
		\max(t_n, nr_n).
	\end{gather}
	Next, we estimate the value $ U_n=\sum_{k=1}^{n}u_k $ from below and establish the following inequalities:
	\begin{gather}
		\label{2026-06-09_15-48-32}
		2U_n
		\overset{\eqref{2026-05-18_12-52-50}}{\ge}
		2\sum_{k=1}^{n}kr_k
		\ge
		2r_n\sum_{k=1}^{n}k
		\ge
		n^2r_n
		\overset{\eqref{2026-05-18_12-40-33}}{\ge}
		\sqrt{n}\,s_n.
	\end{gather}
	We are ready to obtain the desired estimate:
	\begin{gather}
		\notag
		\frac{1}{2}\sum_{n=1}^{\infty}\frac{u_n^2}{U_n}
		\overset{\eqref{2026-05-18_12-52-50}, \eqref{2026-06-09_15-48-32}}{\le}
		\sum_{n=1}^{\infty}\frac{t_n^2}{\sqrt{n}\,s_n}
		+
		\sum_{n=1}^{\infty}\frac{n^2r_n^2}{n^2r_n}
		\overset{\eqref{2026-05-18_12-40-33}}{=}
		\lambda + \sum_{n=1}^{\infty}r_n
		\\
		\notag
		\overset{\eqref{2026-05-18_12-52-15}}{=}
		\lambda + \sum_{k=1}^{\infty}(\nu(k) - \nu(k-1))r_{\nu(k)}
		\overset{\eqref{2026-05-18_12-52-15}}{=}
		\lambda + \sum_{k=1}^{\infty}(\nu(k) - \nu(k-1))\frac{s_{\nu(k)}^2}{\nu(k)^{3/2}s_{\nu(k)}}
		\\
		\notag
		\overset{\eqref{2026-06-10_16-30-09}}{=}
		\lambda + \sum_{k=1}^{\infty}\frac{\nu(k) - \nu(k-1)}{\nu(k)^{3/2}s_{\nu(k)}}\sum_{n=1}^{\nu(k)}t_n^2
		\le
		\lambda + \sum_{n=1}^{\infty}\frac{t_n^2}{\sqrt{n}\,s_n}
		\sum_{\substack{k\in\N\\ \nu(k)\ge n}}\frac{\sqrt{n}(\nu(k) - \nu(k-1))}{\nu(k)^{3/2}}.
	\end{gather}
	Let $ l $ be the smallest natural number such that $ \nu(l)\ge n $. Then
	\begin{gather}
		\notag
		\sum_{\substack{k\in\N\\ \nu(k)\ge n}}\frac{\sqrt{n}(\nu(k) - \nu(k-1))}{(\nu(k))^{3/2}}
		=
		\frac{\sqrt{n}(\nu(l) - \nu(l-1))}{\nu(l)^{3/2}}
		+
		\sum_{\substack{k\in\N\\ \nu(k-1)\ge n}}\frac{\sqrt{n}(\nu(k) - \nu(k-1))}{(\nu(k))^{3/2}}
		\\
		\notag
		\le
		\frac{\sqrt{n}}{\sqrt{\nu(l)}}
		+
		\sqrt{n}\int_{n}^{+\infty}x^{-3/2}\sdiff x
		\le
		1 + 2
		=
		3.
	\end{gather}
	Therefore, we obtain the desired estimate:
	\begin{gather}
		\notag
		\frac{1}{2}\sum_{n=1}^{\infty}\frac{u_n^2}{U_n}
		\le
		\lambda + 3\sum_{n=1}^{\infty}\frac{t_n^2}{\sqrt{n}\,s_n}
		=
		4\lambda.
	\end{gather}
\end{proof}

	\section*{Acknowledgments}
		The author would like to thank V. N. Temlyakov and A. P. Solodov for posing the problem during the school held at the Sirius Mathematical Center. The author also acknowledges the assistance of ChatGPT-5 with setting up computer experiments and ChatGPT-5.5 with detecting the equivalence of Conditions~\ref{2026-06-08_16-57-27} and~\ref{2026-05-17_13-35-30} of Theorem~\ref{2026-05-17_13-33-16}, proving Lemma~\ref{2026-06-08_21-18-46}, and significantly simplifying the lower bound in Lemma~\ref{2026-06-02_12-10-50}. The author is grateful to D. M. Stolyarov for his help with the preparation of the paper.

	\bibliographystyle{amsplain}
	\bibliography{greedy_algorithms}
	
	\bigskip
	
	\noindent
	\textsc{St. Petersburg State University}
	
	\noindent
	\textit{Email address}: \texttt{Novikov.3.14@yandex.ru}
\end{document}